\newcommand\NN{\mathrm{I\!N}}
\newcommand\CC{\mathbb{C}}
\newcommand{\rI}{\mathrm{I}}
\newcommand{\lcm}{\mathrm{lcm}}
\newtheorem{theorem}{Theorem}[section]
\newtheorem{definition}[theorem]{Definition}
\newtheorem{corollary}[theorem]{Corollary}
\newtheorem{example}[theorem]{Example}
\newtheorem{lemma}[theorem]{Lemma}
\newtheorem{remark}[theorem]{Remark}
\newcommand{\address}{Address: Department of Mathematics, University of North Texas, 1155 Union Circle \#311430, Denton, TX 76203-5017, USA; E-mail: kiko.kawamura@unt.edu, tobeymathis@my.unt.edu}
\numberwithin{equation}{section}
\title{$\Delta$-revolving sequences and \\ self-similar sets in the plane \\ 
{\small - Dedicated to the late Professor Shunji Ito -}}
\author{Kiko Kawamura and Tobey Mathis \\University of North Texas \footnote{\address}}
\begin{document}
\maketitle

\begin{abstract}
Initiated by Mizutani and Ito's work in 1987, Kawamura and Allen recently showed that certain self-similar sets generalized by two similar contractions have a natural complex power series representation, which is parametrized by past-dependent revolving sequences. 

In this paper, we generalize the work of Kawamura and Allen to include a wider collection of self-similar sets. We show that certain self-similar sets consisting of more than two similar contractions also have a natural complex power series representation, which is parametrized 
by {\it $\Delta$-revolving sequences}. This result applies to several other famous self-similar sets such as the Heighway dragon, Twindragon, and Fudgeflake. 
\end{abstract}

{\bf Keywords.}Revolving sequences, Tiling dragons, Self-similar sets

{\bf MSC (2022).} Primary: $28A80$; Secondary: $37B10$.

\section{Introduction}

The history of systematic mathematical research on self-similar fractals dates back to 1981, when Hutchinson proved the following celebrated theorem~\cite{Hutchinson-1981}: For any finite family of similar contractions $\{\psi_{0},\psi_{1},\dots,\psi_{m-1}\}$ on $\CC$, which is called an iterated function system or IFS, there exists a unique non-empty compact solution $E$ of the set equation:
$$E=\bigcup_{i=0}^{m-1}\psi_{i}(E).$$ 
We call $E$ an attractor or a self-similar set for the IFS. 
\medskip

It is well known that any point of $E$ can be represented by at least one coding sequence $(x_i)_{i=1}^{\infty}$ such that 
\begin{equation}
\label{eq:coding}
E=\left\{ \lim_{n \to \infty}\psi_{x_1} \circ \psi_{x_2} \circ \cdots \circ \psi_{x_n}(0): (x_i)_{i=1}^{\infty} \in \{0, 1, 2, \cdots, m-1\}^{\NN} \right\}.
\end{equation}
For more details, see Falconer's book~\cite{Falconer-2003}. 
\medskip

Eleven years prior to Hutchinson’s paper \cite{Hutchinson-1981}, C.~Davis and D.~E.~Knuth~\cite{Davis+Knuth-1970} in 1970 introduced the idea that some tiling fractals are associated with the complex number system. This idea was further discussed by Gilbert~\cite{Gilbert-1982}. In \cite{Davis+Knuth-1970}, Davis and Knuth introduced the concept of {\it revolving sequences} to represent Gaussian integers:
for any $z=x+iy$ with $x, y \in \mathbb{Z}$,  there exists a finite revolving sequence with length $N$: $(\delta_{0}, \delta_{1},\dots \delta_{N})$ such that  
\begin{equation*}
  z=\sum_{n=0}^{N} \delta_{N-n}(1+i)^{n}, 
\end{equation*}
where $\delta_j \in \{0, 1, -1, i, -i\}$ with the restriction that the non-zero values must follow the cyclic pattern from left to right:
$$\cdots \to 1 \to (-i) \to (-1) \to i \to 1 \to \cdots.$$ 
Notice that a revolving sequence is created by a cycle of 90 degree clockwise rotation on the unit circle. 
\medskip

Let $W$ be the set of all revolving sequences. M.~Mizutani and S.~Ito in 1987~\cite{Ito-1987} considered the following set of points in the complex plane:
\begin{equation}
\label{Mizutani-Ito-X}
  X:=\left\{ \sum_{n=1}^{\infty} \delta_{n}(1+i)^{-n}: (\delta_{1}, \delta_{2}, \dots ) \in W \right\}.
\end{equation}
Using techniques from symbolic dynamics, they proved that $X$ (which they called {\it Tetradragon}) is tiled by four rotated paper-folding dragons. Paper-folding dragon is generated by the IFS: 
\begin{equation}
\label{rev-dragon}
\begin{cases}
 \psi_0(z)=(\frac{1-i}{2})z,& \\
 \psi_1(z)=(\frac{-1-i}{2})z + \frac{1-i}{2}.&
\end{cases}
\end{equation}
Observe that revolving sequences $(\delta_n)_{n=1}^{\infty}$ are past-dependent sequences.

In the same paper, they mentioned an interesting conjecture: suppose $\delta_n$ moves on the unit circle counterclockwise instead of clockwise, then 
\begin{equation*}
X^{R}:=\left\{ \sum_{n=1}^{\infty} \overline{\delta_{n}}(1+i)^{-n}: (\delta_{1},\delta_{2}, \delta_{3}, \dots )\in W \right\} 
\end{equation*}
is a union of four L\'evy's dragon curves. L\'evy's dragon is a well-known continuous curve with positive area. Mizutani and Ito's computer simulation of $X^{R}$ empirically confirmed the conjecture; however, they could not give a mathematical proof. Kawamura in 2002~\cite{Kawamura-2002} proved that their conjecture is correct using a different approach from the viewpoint of functional equations.

These two concrete examples indicate that not only tiling fractals but more generally some self-similar attractors can be parametrized by past-dependent sequences. 

Recently, Kawamura and Allen~\cite{Kawamura-Allen-2020} defined {\it Generalized Revolving Sequences (GRS)}, where the 90 degree angle of rotation is replaced with a more general angle $|\theta|=\frac{2\pi q}{p} \leq \pi$. In other words, $\delta_{k} \in \{0, 1, e^{i\theta}, e^{2i\theta}, \cdots, e^{(p-1)i \theta}\}$ with the restriction that the non-zero values must follow the following cyclic pattern from left to right:
$$\cdots \to 1 \to e^{i\theta} \to e^{2i\theta} \to e^{3i\theta} \to \cdots.$$

Define $W_{\theta}$ to be the set of all generalized infinite revolving sequences with parameter $\theta$. Kawamura and Allen considered the following set. 
\begin{equation}
\label{eq:X1}
X_{\alpha,\theta}:=\left\{ \sum_{n=1}^{\infty} \delta_{n}\alpha^{n}: (\delta_{1}, \delta_{2},\dots )\in W_{\theta} \right\}.
\end{equation}
where $\alpha \in\CC$ such that $|\alpha|<1$ and $|\theta|=\frac{2 \pi q}{p}\leq\pi$. Note that generalized revolving sequences are past-dependent. 
 
Kawamura and Allen proved that $X_{\alpha, \theta}$ is a union of $p$ rotated copies of the self-similar attractor $M_{\alpha, \theta}$ of an IFS consisting of two similarities with the same scale factor, one of which involves a rotation through an angle $\theta$:
\begin{equation}
\label{eq:IFS1} 
\begin{cases}
 \psi_0(z)=\alpha z,& \\
 \psi_1(z)=(\alpha e^{i \theta}) z+\alpha.
\end{cases}
\end{equation}

\begin{figure}
\begin{center}
\includegraphics[height=4cm,width=4cm, bb=100 100 1000 1000]{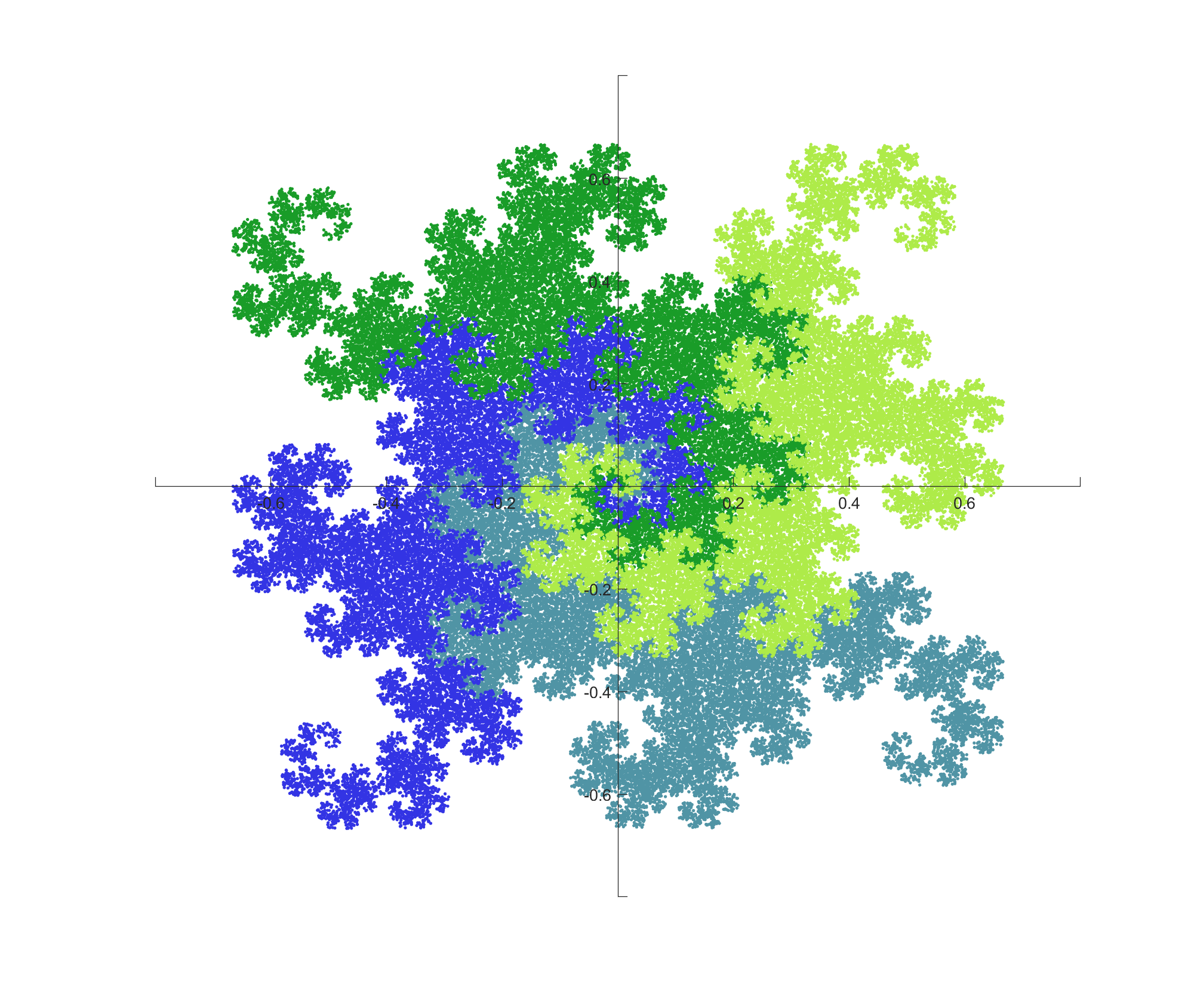}
\qquad \qquad
\includegraphics[height=4cm,width=4cm, bb=100 100 1000 1000]{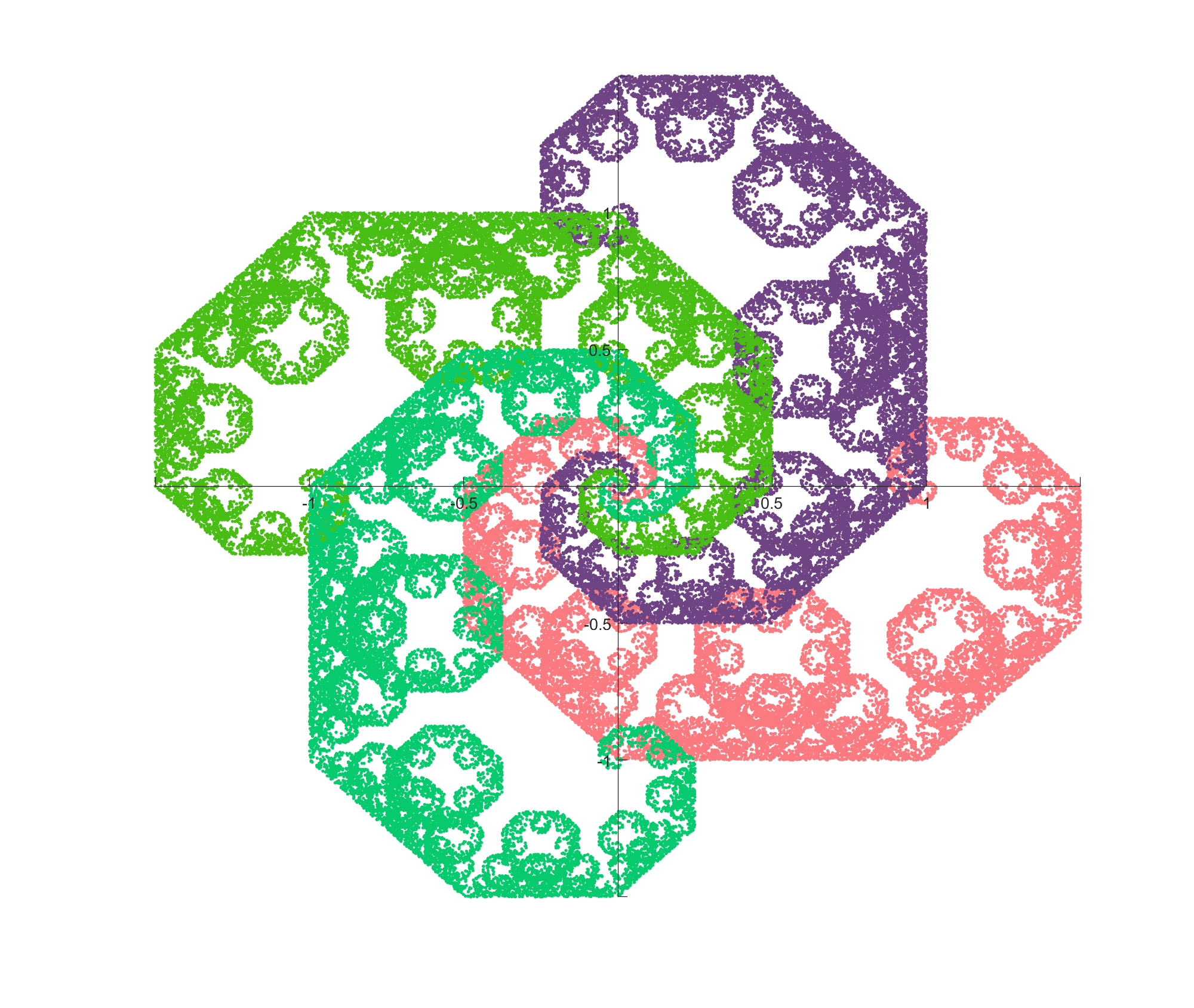}
\end{center}
\caption{$X_{ \alpha,\theta}$ for $\alpha=\frac{1-i}{2}, \theta=-\frac{\pi}{2}$ (left), $\alpha=\frac{1-i}{2}, \theta=\frac{\pi}{2}$ (right)}
\label{figure1}
\end{figure}

Figure \ref{figure1} shows two examples of $X_{\alpha, \theta}$. Observe that this is a generalization of Mizutani-Ito's and Kawamura's   results: if $\alpha=(1-i)/2$ and $\theta=-\pi/2$ then $X_{ \alpha,\theta}$ is a union of four paper-folding dragons~\cite{Ito-1987}. If $\alpha=(1-i)/2$ and $\theta=\pi/2$ then $X_{ \alpha,\theta}$ is a union of four L\'evy dragon curves.
\medskip

In this paper, we continue building on the work initiated by Mizutani and Ito to include a wider collection of self-similar sets. In particular, the following questions are discussed.
\begin{enumerate} 
\item Does replacing the constant term $\alpha$ in \eqref{eq:IFS1} by an arbitrary constant give any major influence to the property of generalized revolving sequences in \eqref{eq:X1}?
\item Consider self-similar sets generated by the IFS consisting of more than two similar contractions. Can we find a complex power series representation, which is parametrized by past-dependent sequences? How does the number of contractions influence the properties of generalized revolving sequences?
\end{enumerate}

\medskip
In section \ref{delta-revolving sequences}, we introduce {\it $\Delta$-revolving sequences} allowing revolutions on the unit circle by more than one angle, while keeping the current-dependency. We illustrate the difference between generalized revolving sequences and $\Delta$-revolving sequences with examples. 

\medskip

In section \ref{main results}, we show that certain self-similar attractors of IFSs consisting of $m \geq 2$ similar contractions have a natural complex power series representation, which is parametrized by $\Delta$-revolving sequences. This main theorem applies to several other famous self-similar sets such as the Heighway dragon, Twindragon, and Fudgeflake. 

\medskip

Lastly in section \ref{remark}, we introduce a slight modification of $\Delta$-revolving sequences called {\it $\Delta_0$-revolving sequences}. $\Delta_0$-revolving sequences are a more natural extension of the notion of generalized revolving sequences than $\Delta$-revolving sequences. As a corollary of the main theorem, we give a condition when certain self-similar attractors can also be parametrized by $\Delta_0$-revolving sequences.

\section{$\Delta$-Revolving Sequences}
\label{delta-revolving sequences}

First, we review some notations and a result from \cite{Kawamura-Allen-2020}.
\begin{definition}[Revolving Angle]
        We say $\theta\in(-\pi,\pi]$ is a $\textbf{revolving angle}$ if $\theta$ is a nonzero rational multiple of $2\pi$, that is, $|\theta|=(2\pi q)/p$ where $p\in\mathbb{N},\:q\in\mathbb{N}_0$.
    \end{definition}

\begin{definition}[Generalized Revolving Sequence]
Define 
$$\Delta_{\theta}:=\{0, 1, e^{i \theta}, e^{2i \theta}, \cdots e^{(p-1)i \theta}\},$$
where $\theta$ is a revolving angle. We say that a sequence $(\delta_n)\in \Delta_{\theta}^{\NN}$ satisfies the \textbf{Generalized Revolving Condition (GRC)}, if        
        \begin{enumerate}
            \item $\delta_1$ is free to choose. 
						\item If $\delta_1=\delta_2=\cdots=\delta_n=0$, then $\delta_{n+1}$ is free to choose.
            \item Otherwise, $\delta_{n+1}=0$ or $\delta_{n+1}=\delta_{j_0(n)} e^{i\theta}$, where $j_0(n)=\max \{j \leq n: \delta_j \neq 0 \}$.
				\end{enumerate}		
\end{definition}

\begin{example}
        If $\theta=\pi/3$, $\Delta_{\theta}:=\{0, 1, e^{i\pi/3}, e^{2i\pi/3}, e^{i\pi}, e^{4i\pi/3}, e^{5i\pi/3}\}.$ 
				\begin{equation}
				\label{example:GRS}
				(\delta_n)=(0, 1, e^{i(\pi/3)}, 0, e^{i(2\pi/3)}, 0, 0, e^{i\pi}, e^{i(4\pi/3)}, 0, \dots)
				\end{equation}
				is a generalized revolving sequence. 
\end{example}

Define $W_{\theta}$ as the set of all generalized revolving sequences with parameter $\theta$: 
\begin{equation*}
W_{\theta}:=\{(\delta_1, \delta_2, \cdots) \in \Delta_{\theta}^{\NN}: (\delta_1, \delta_2, \cdots) \mbox{ satisfies the GRC}\}.
\end{equation*}

Observe that $\delta_n$ moves on the unit circle counterclockwise if $\theta >0$, and clockwise if $\theta < 0$. Roughly speaking, $j_0(n)$ is the last time before $n$ that $\delta_j$ is on the unit circle. Therefore, generalized revolving sequences are past-dependent. 

\begin{theorem}[Kawamura-Allen, 2020]
\label{theorem-Kawamura-Allen}
For a given $\alpha \in\CC$ such that $|\alpha|<1$ and $\theta$ is a revolving angle, define
\begin{equation*}
X_{\alpha,\theta}:=\left\{ \sum_{n=1}^{\infty} \delta_{n}\alpha^{n}: (\delta_n)\in W_{\theta} \right\}.
\end{equation*}
Then $X_{\alpha, \theta}$ is a union of $p$ rotated copies of $M_{\alpha,\theta}$:
\begin{equation*}
X_{\alpha,\theta}=\bigcup_{l=0}^{p-1}(e^{i \theta})^l M_{\alpha,\theta},
\end{equation*}
where $M_{\alpha,\theta}=\psi_0(M_{\alpha,\theta})\cup\psi_1(M_{\alpha,\theta})$ is the self-similar set generated by the IFS
\begin{equation*}
\begin{cases}
 \psi_0(z)=\alpha z,& \\
 \psi_1(z)=(\alpha e^{i \theta}) z+\alpha.
\end{cases}
\end{equation*}
\end{theorem}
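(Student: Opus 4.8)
The plan is to prove the set equality $X_{\alpha,\theta}=\bigcup_{l=0}^{p-1}(e^{i\theta})^l M_{\alpha,\theta}$ by a direct double inclusion, exploiting the recursive structure of both sides. First I would set up the key bookkeeping device: given a generalized revolving sequence $(\delta_n)\in W_\theta$, let $N\geq 1$ be the first index with $\delta_N\neq 0$ (if no such index exists the sum is $0$, which lies in $M_{\alpha,\theta}$ since $0=\psi_0(0)$). Writing $\delta_N=e^{ki\theta}$ for some $k\in\{0,\dots,p-1\}$, I would observe that by the GRC the \emph{tail} sequence starting at position $N$, once divided by $e^{ki\theta}$, is again a generalized revolving sequence beginning with a nonzero entry equal to $1$. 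This is the mechanism that will let me peel off the leading rotation factor $(e^{i\theta})^k$ and reduce to understanding the ``normalized'' object
\begin{equation*}
M := \left\{ \sum_{n=1}^\infty \delta_n \alpha^n : (\delta_n)\in W_\theta,\ \delta_1 = 1 \text{ or } (\delta_n)\equiv 0 \right\}.
\end{equation*}

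Second, I would show $M = M_{\alpha,\theta}$ by verifying that $M$ satisfies the IFS set equation $M=\psi_0(M)\cup\psi_1(M)$ and invoking Hutchinson's uniqueness. For the $\psi_0$ part: if $(\delta_n)$ has $\delta_1=1$ (or is all zero) then so does the shifted sequence $(\delta_{n+1})$ after possibly renormalizing — but here one must be careful, because after $\delta_1=1$ the next nonzero entry is forced to be $e^{i\theta}$, not $1$. The clean way is to split on $\delta_2$: either $\delta_2=0$, in which case we recurse, or $\delta_2=e^{i\theta}$. Tracking this, the map ``shift and peel'' sends an element of $M$ associated to $(1,\delta_2,\delta_3,\dots)$ to $\alpha\cdot(\text{element of }X_{\alpha,\theta})$ plus the constant $\alpha\delta_1=\alpha$; and conversely. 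Matching this against $\psi_0(z)=\alpha z$ and $\psi_1(z)=\alpha e^{i\theta}z+\alpha$ is exactly where the two similarities of the IFS come from — $\psi_1$ corresponds to the case where the leading $1$ is ``continued'' by a rotated block, and $\psi_0$ to the case where it is not. I would make this precise by showing $X_{\alpha,\theta}=\psi_0(X_{\alpha,\theta})\cup\psi_1(X_{\alpha,\theta})$ directly at the level of $X_{\alpha,\theta}$, which is cleaner since $X_{\alpha,\theta}$ is closed under the full GRC without the normalization constraint, and then deduce the decomposition into $p$ rotated copies of $M_{\alpha,\theta}$ from the first-nonzero-index analysis above.

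Third, for the reverse inclusion $\bigcup_l (e^{i\theta})^l M_{\alpha,\theta}\subseteq X_{\alpha,\theta}$, I would argue that every point of $M_{\alpha,\theta}$ arises as $\lim_n \psi_{x_1}\circ\cdots\circ\psi_{x_n}(0)$ for some coding $(x_i)\in\{0,1\}^{\NN}$, via \eqref{eq:coding}, and then translate each such coding into an explicit generalized revolving sequence: a run of $x_i=0$ contributes zeros, and each $x_i=1$ advances the rotation by one step of $\theta$, so the resulting $(\delta_n)$ automatically satisfies the GRC. Multiplying by $(e^{i\theta})^l$ shifts the whole cyclic pattern by $l$, which is still a valid revolving sequence (with a different, but allowed, starting phase). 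Conversely the forward inclusion uses the same dictionary in reverse together with the peeling lemma. Throughout, I would need the elementary estimate that the series $\sum \delta_n\alpha^n$ converges (immediate from $|\alpha|<1$ and $|\delta_n|\le 1$) and that the set $X_{\alpha,\theta}$ is compact, so that Hutchinson's theorem applies and the unique attractor is genuinely $M_{\alpha,\theta}$.

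The main obstacle I anticipate is the careful handling of the ``phase resynchronization'' when peeling off leading blocks: after factoring out $(e^{i\theta})^k$ from a sequence whose first nonzero term is $e^{ki\theta}$, the renormalized tail begins with $1$, but its \emph{subsequent} forced value is $e^{i\theta}$ — so the induction is not on ``sequences beginning with $1$'' in a naive self-similar way, and one must instead phrase the recursion in terms of the full set $X_{\alpha,\theta}$ and use the two maps $\psi_0,\psi_1$ to absorb exactly the two structurally different continuations of a nonzero symbol (stop the run, or rotate once more). Getting the constant terms $\alpha$ and $\alpha$ and the rotation factor $\alpha e^{i\theta}$ to line up correctly with $\psi_0,\psi_1$ — rather than, say, off by one power of $\alpha$ — is the delicate computation that makes the whole correspondence work, and it is essentially forced once the peeling is set up consistently.
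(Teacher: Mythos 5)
There is a genuine gap in your second step. The identity $X_{\alpha,\theta}=\psi_0(X_{\alpha,\theta})\cup\psi_1(X_{\alpha,\theta})$, which you propose to prove ``directly at the level of $X_{\alpha,\theta}$'' and on which you hang the whole recursion, is false in general: since $X_{\alpha,\theta}$ is nonempty and compact (as you note), Hutchinson's uniqueness would then force $X_{\alpha,\theta}=M_{\alpha,\theta}$, contradicting the conclusion of the theorem whenever the $p$ rotated copies are distinct (e.g.\ $\alpha=(1-i)/2$, $\theta=-\pi/2$, where $X$ is a union of four distinct paper-folding dragons). The computation shows why: if $\delta_1=\zeta\neq 0$, the GRC forces the first nonzero entry of the tail to be $\zeta e^{i\theta}$, so the point equals $\zeta\alpha+\zeta\alpha e^{i\theta}y=\zeta\,\psi_1(y)$ with $y$ in the normalized set — the translation constant is $\zeta\alpha$, not $\alpha$, so the $\zeta$-branch is $\zeta\cdot\psi_1(\cdot)$ rather than $\psi_1(\cdot)$. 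Relatedly, your normalized set $M$ is mis-defined: requiring $\delta_1=1$ breaks the $\psi_0$-branch, because $\psi_0(M)=\alpha M$ corresponds to sequences $(0,1,e^{i\theta},\dots)$, which violate $\delta_1=1$. The set that actually satisfies the Hutchinson equation is
\begin{equation*}
Y_1:=\Bigl\{\sum_{n=1}^{\infty}\delta_n\alpha^n:(\delta_n)\in W_\theta,\ \delta_{j}=1\ \text{where } j=\min\{n:\delta_n\neq 0\}\Bigr\}\cup\{0\},
\end{equation*}
for which the split on $\delta_1$ is clean ($\delta_1=0\mapsto\psi_0$, $\delta_1=1\mapsto\psi_1$), so $Y_1=\psi_0(Y_1)\cup\psi_1(Y_1)=M_{\alpha,\theta}$, and then $X_{\alpha,\theta}=\bigcup_{l=0}^{p-1}e^{il\theta}Y_1$ follows from your first-nonzero-value analysis.

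Your third step is sound and is essentially the paper's actual route: the paper never verifies a set equation on the sequence side at all, but uses Lemma~\ref{lemma:coding expression} to write every point of the attractor as $\sum_{n}\alpha^{n-1}c_{x_n}e^{i\sum_{j<n}\theta_{x_j}}$ for a coding $(x_n)$, reads off the revolving sequence from the partial sums of the angles (so the first nonzero term is automatically $1$), and then takes the union over the initial phase $\zeta\in\{1,e^{i\theta},\dots,e^{i(p-1)\theta}\}$. If you replace your second step either by that dictionary or by the corrected set $Y_1$ above, the argument goes through; as written, the key self-similarity claim is wrong and the induction would not close.
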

\medskip

Next, we loosen the restriction of a generalized revolving sequence, allowing revolutions on the unit circle by more than one angle.

    \begin{definition}[Generator Set]
        Let $S:=\{\theta_0, \theta_1,\dots,\theta_{m-1}\}$ where $\theta_0, \theta_1,\dots,\theta_{m-1}$ are distinct angles and $\theta_0=0$.  Its elements can be written as $\theta_j=\frac{2\pi q_j}{p_j}$.  
				We refer to the set S as a \textbf{generator set}.
    \end{definition}
        
    \begin{definition}[Revolving Group Generated by $S$]
        Let $S=\{\theta_0, \theta_1,\dots,\theta_{m-1}\}$ be a generator set. Define 
				$$\Delta:=\big{\{}e^{i\sum_{n=1}^{m-1}k_n\theta_n}\::\:k_n\in\{0,1,\dots,p_n-1\}\big{\}}.$$
		\end{definition}
				
		\begin{example}
        If $S=\{0,\pi,\frac{2\pi}{3}\}$, then  
				$$\Delta= \{1,e^{i(\pi)},e^{i(\frac{2\pi}{3})},e^{i(\frac{4\pi}{3})},e^{i(\pi+\frac{2\pi}{3})},e^{i(\pi+\frac{4\pi}{3})}\}.$$
		\end{example}
		From this example, it is clear that $\Delta$ is a group, which we call the \textbf{revolving group generated by S}. Notice that $\Delta$ does not contain $0$, and $|\Delta|=\lcm (|p_1|, \cdots, |p_{m-1}|)$. 

\begin{definition}[$\Delta$-Revolving Sequence]
		We say that a sequence $(\gamma_n)\in\Delta^\mathbb{N}$ satisfies the \textbf{$\Delta$-Revolving Condition (DRC)}, if
        \begin{enumerate}
            \item $\gamma_1$ is free to choose in $\Delta$,
            \item For $k \geq 1$, $\gamma_{k+1}=\gamma_{k} e^{i\theta_j }$ for some $\theta_j \in S$.
        \end{enumerate}
        If $(\gamma_n)$ satisfies the DRC, we call $(\gamma_n)$ a \textbf{$\Delta$-Revolving Sequence}.
		\end{definition}
		
		\begin{example}
        If $S=\{0,\pi,\frac{2\pi}{3}\}$,  $\Delta= \{1,e^{i(\pi)},e^{i(\frac{2\pi}{3})},e^{i(\frac{4\pi}{3})},e^{i(\pi+\frac{2\pi}{3})},e^{i(\pi+\frac{4\pi}{3})}\}$. And, for examples,  
				\begin{equation}
				\label{example:delta}
				(\gamma_n)=(1, e^{i\pi},e^{i\pi},e^{i(\pi+\frac{2\pi}{3})},e^{i(\pi+\frac{2\pi}{3})},e^{i(\pi+\frac{2\pi}{3})}, e^{i(\frac{2\pi}{3})}, \dots)
				\end{equation}
				is a $\Delta$-revolving sequence. 
				\end{example}
				
Define $W^{\Delta}$ as the set of all $\Delta$-revolving sequences with generator set $S$ and $W_{1}^{\Delta}$ be the subset of $W^{\Delta}$ with the restriction: $\gamma_1=1$.

Compare examples: \eqref{example:GRS} and \eqref{example:delta}. It is clear that the generalized revolving condition is associated with a single angle of rotation on the unit circle, while the $\Delta$-revolving condition is associated with multiple angles of rotation on the unit circle. Observe also that generalized revolving sequences allow that $\delta_n$ goes back to the origin and stays there as long as it wants, while $\Delta$-revolving sequences do not allow that $\gamma_n$ goes back to the origin; however, they allow "staying in place". Informally, each digit $\gamma_n$ can choose freely either to stay ($\theta_0=0$) or move ($\theta_j \in S \setminus \{\theta_0\}$). However, the next digit $\gamma_{k+1}$ is determined based on the current position $\gamma_k$. Therefore, $\Delta$-revolving sequences are current-dependent, while the generalized revolving sequences are past-dependent. 

Alternatively, due to the group structure of $\Delta$, a $\Delta$-Revolving Sequence can be thought of as an infinite walk on the Cayley Digraph of the group $\Delta$ generated by the set with elements corresponding to $S$. 

    \begin{figure}
    \begin{center}
		\includegraphics[height=4.5cm,width=4.5cm, bb=50 50 400 400]{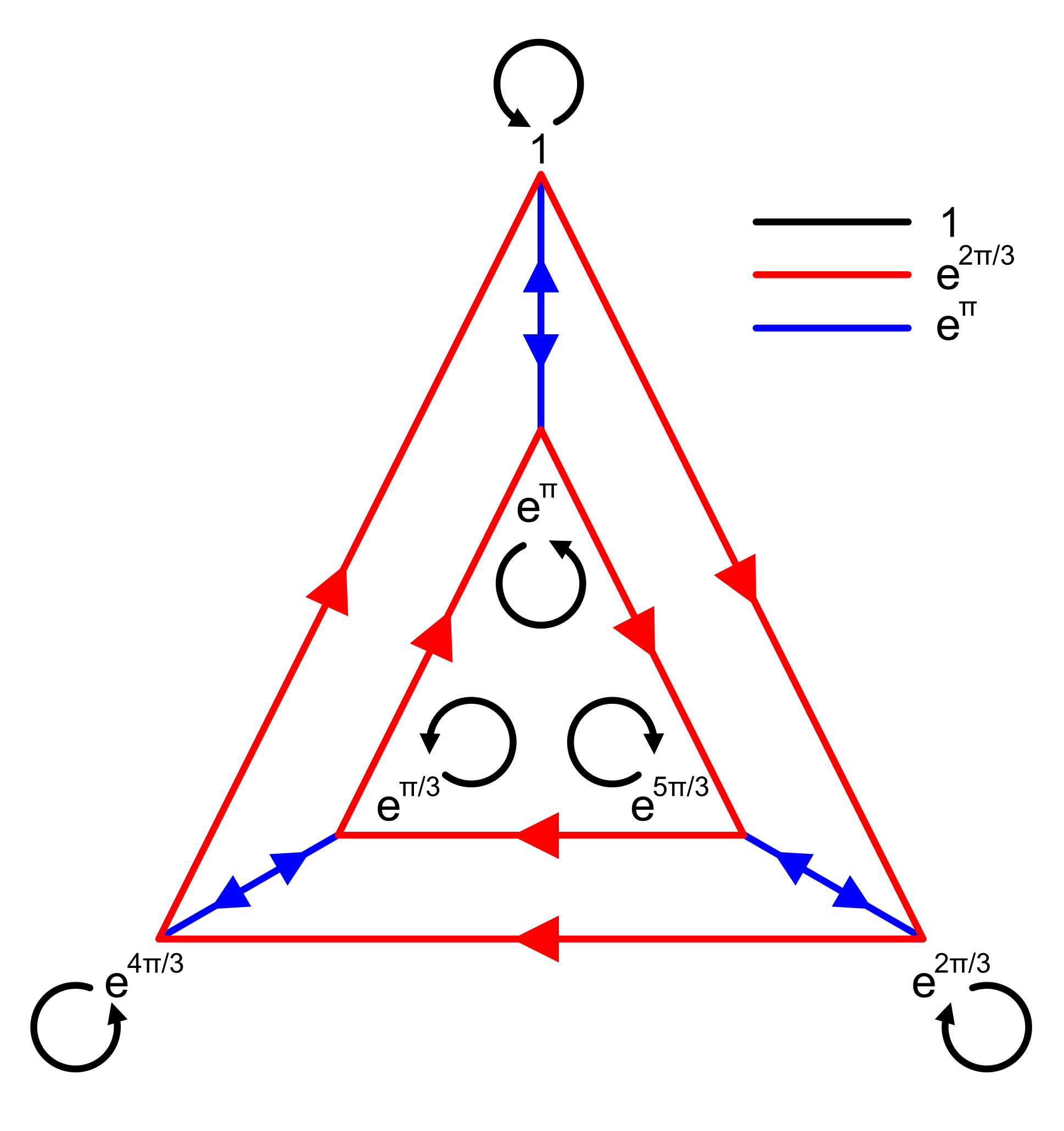} 
		\end{center}
		\caption{Cayley Digraph for generator set $S=\{0,\frac{2\pi}{3},\pi\}$}
    \label{figure4} 
    \end{figure}

\section{Main results}
\label{main results}

\begin{lemma}
		\label{lemma:coding expression} 
    Consider a standard IFS $\{\psi_0,\psi_1,\dots,\psi_{m-1}\}$ given by 
		\begin{equation}
        \label{IFS-general}
                \psi_k(z)=\alpha_k z+ c_k, 
        \end{equation}
				where $\alpha_{k} \in\mathbb{C}$ such that $|\alpha_k|<1$ and $c_k \in\mathbb{C}$ for $k=0,1,\dots,m-1$. 
				
		Then the self-similar set $E=\cup_{i=0}^{m-1}\psi_{i}(E)$ generated by \eqref{IFS-general} has the following natural series representation:
    \begin{equation}
    \label{Attractor}
        E=\left\{\sum_{n=1}^\infty c_{x_n}\prod_{k=0}^{m-1}\alpha_k^{I_k(x,n-1)}\::\: x=(x_n)\in\{0,1,\dots,m-1\}^\mathbb{N}\right\},
    \end{equation}
    where $I_k(x,n):=\#\{j\leq n\::\:x_j=k\}$ for $k=0,1,\dots m-1$ and $n \in \NN$.
		\end{lemma}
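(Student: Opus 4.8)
The plan is to start from the standard coding representation \eqref{eq:coding} of the attractor and compute the composition $\psi_{x_1}\circ\psi_{x_2}\circ\cdots\circ\psi_{x_n}(0)$ in closed form. First I would establish, by induction on $n$, that for every finite word $x_1x_2\cdots x_n$,
$$\psi_{x_1}\circ\psi_{x_2}\circ\cdots\circ\psi_{x_n}(0)=\sum_{j=1}^{n}c_{x_j}\prod_{i=1}^{j-1}\alpha_{x_i}.$$
The base case $n=1$ is just $\psi_{x_1}(0)=c_{x_1}$. For the inductive step, apply the inductive hypothesis to the word $x_2\cdots x_n$ and use $\psi_{x_1}(w)=\alpha_{x_1}w+c_{x_1}$ to get $\psi_{x_1}\circ\cdots\circ\psi_{x_n}(0)=\alpha_{x_1}\big(\sum_{j=2}^{n}c_{x_j}\prod_{i=2}^{j-1}\alpha_{x_i}\big)+c_{x_1}$; distributing $\alpha_{x_1}$ into each product and reindexing yields the stated identity. (Peeling off $\psi_{x_n}$ from the inside works equally well.)

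Second, I would rewrite the coefficient $\prod_{i=1}^{j-1}\alpha_{x_i}$ by grouping the factors according to the symbol that occurs: since $\alpha_{x_i}=\alpha_k$ exactly when $x_i=k$, one has
$$\prod_{i=1}^{j-1}\alpha_{x_i}=\prod_{k=0}^{m-1}\alpha_k^{\,I_k(x,j-1)},$$
with $I_k$ as in the statement, the empty product for $j=1$ being consistent with $I_k(x,0)=0$. Substituting this back shows that the $n$-th composition applied to $0$ is precisely the $n$-th partial sum of the series $\sum_{n=1}^{\infty}c_{x_n}\prod_{k=0}^{m-1}\alpha_k^{\,I_k(x,n-1)}$.

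Finally, I would let $n\to\infty$. Since the partial sums of this series are literally the iterates $\psi_{x_1}\circ\cdots\circ\psi_{x_n}(0)$, the convergence of the series and the fact that its sum ranges over exactly the points of $E$ as $x$ ranges over $\{0,1,\dots,m-1\}^{\mathbb{N}}$ are nothing but the content of \eqref{eq:coding}; equating the two descriptions of $E$ completes the proof. I expect no genuine analytic obstacle here — all the substantive work (existence and the coding characterization of the attractor) is already packaged in Hutchinson's theorem and \eqref{eq:coding}. The only point demanding care is the bookkeeping in the grouping step and keeping the index conventions consistent (the shift between $I_k(x,n-1)$ in the final series and $I_k(x,j-1)$ in the partial sums, and the empty-product/empty-count conventions).
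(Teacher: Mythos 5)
Your proof is correct, but it follows a different route from the one in the paper. You compute the iterated composition in closed form, showing by induction that $\psi_{x_1}\circ\cdots\circ\psi_{x_n}(0)=\sum_{j=1}^{n}c_{x_j}\prod_{i=1}^{j-1}\alpha_{x_i}$, regroup the product by symbol to get $\prod_{k=0}^{m-1}\alpha_k^{I_k(x,j-1)}$, and then observe that the partial sums of the series in \eqref{Attractor} are literally the iterates, so the lemma reduces entirely to the coding characterization \eqref{eq:coding}. The paper instead argues at the level of sets: it asserts that \eqref{eq:coding} yields one inclusion, then shows the right-hand side of \eqref{Attractor} satisfies the set equation $E=\cup_i\psi_i(E)$ and is compact --- being the continuous image under the coding map $f$ of the compact product space $M=\{0,\dots,m-1\}^{\NN}$ --- so that uniqueness of the Hutchinson attractor forces equality. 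Your approach is more elementary and self-contained: it makes explicit the computation that the paper's first sentence leaves implicit, and it needs neither the compactness of $M$ nor the uniqueness clause of Hutchinson's theorem, only the (stated as well known) identity \eqref{eq:coding}. What the paper's argument buys in exchange is independence from the precise form of \eqref{eq:coding}: verifying the set equation plus compactness pins down $E$ even if one does not want to track the pointwise limits. The only small point to make explicit in your write-up is the absolute convergence of the series, which follows from $\bigl|\prod_{k=0}^{m-1}\alpha_k^{I_k(x,n-1)}\bigr|\leq(\max_k|\alpha_k|)^{n-1}$ with $\max_k|\alpha_k|<1$; you gesture at this and it is indeed routine.
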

		Notice that $I_k(x,n)$ is the number of $k$'s occurring in the first $n$ digits of the sequence $(x_n)$. 
		
		\begin{proof}
		From \eqref{eq:coding}, it follows that every point in $E$ can be represented by at least one coding sequence $(x_n)$ given in \eqref{Attractor}. Vice versa, the set on the right of \eqref{Attractor} satisfies the set equation $E=\cup_{i=0}^{m-1}\psi_{i}(E)$. Let $M:=\{0,1,\dots,m-1\}^\NN$. It is well known that $M$ is compact in the product topology. Define the function $f: M \to \CC$ by 
		 \begin{equation*}
    f(x)=\sum_{n=1}^\infty c_{x_n}\prod_{k=0}^{m-1}\alpha_k^{I_k(x,n-1)}.
    \end{equation*}
		Since it is easily seen that $f$ is continuous and $f(M)=E$, it follows that $E$ is compact so it is closed.
		\end{proof}

Before presenting the main theorem, let us define one more notation. 

\begin{definition}[Constant Sequence of $(\gamma_n)$] 

Let $S=\{\theta_0, \theta_1,\dots,\theta_{m-1}\}$ be a generator set and $\Delta$ be the revolving group generated by S. 

For $\Sigma:=(\gamma_n)\in W^\Delta$ and $c_k \in \mathbb{C}$ for $k=0, 1, \dots m-1$, define the function $s_\Sigma:\mathbb{N}\longrightarrow \{c_0,c_1,\dots,c_{m-1}\}$ by 
        \begin{equation*}
            s_\Sigma(n)= c_{k}, \text{ if } \gamma_{n+1}=\gamma_n e^{i\theta_{k}}. 
        \end{equation*}
We say that the sequence $(s_n)$ given by $s_n=s_\Sigma(n)$ is the \em{constant sequence} of $(\gamma_n)$. Notice that we allow $c_l=c_k$ for $l \neq k$.
\end{definition} 

	 \begin{theorem}
	\label{th:maintheorem}
	Let $S=\{\theta_0, \theta_1,\dots,\theta_{m-1}\}$ be a generator set and $\Delta$ be the revolving group generated by S.
	For $\alpha\in\mathbb{C}$ with $|\alpha|<1$ and $c_0,c_1,\dots,c_{m-1}\in\mathbb{C}$, define $X_{\alpha,S}$ as follows.
        \begin{equation}
            X_{\alpha,S}=\left\{\sum_{n=1}^\infty\alpha^{n-1}s_n \gamma_n \::\:\Sigma=(\gamma_n)\in W^{\Delta},\:s_n=s_{\Sigma}(n)\right\}.
        \end{equation}
				
				Then $X_{\alpha,S}$ is the union of $|\Delta|$ many rotated copies of the self-similar attractor $T_{\alpha, S}$:
        \begin{equation*}
            X_{\alpha,S}=\bigcup_{\gamma\in\Delta}\gamma\cdot T_{\alpha,S},
        \end{equation*}
				where $T_{\alpha, S}$ is generated by the IFS:
				\begin{equation*}
\begin{cases}
 \psi_0(z)=\alpha z+c_0,& \\
 \psi_k(z)=(\alpha e^{i \theta_k}) z+c_k, \qquad k=1, 2, \dots, m-1.
\end{cases}
\end{equation*}
    \end{theorem}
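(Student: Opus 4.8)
The plan is to connect the series representation of $X_{\alpha,S}$ to the coding-sequence representation of the attractor $T_{\alpha,S}$ supplied by Lemma~\ref{lemma:coding expression}. The key observation is that a $\Delta$-revolving sequence $\Sigma=(\gamma_n)$ is completely determined by its starting value $\gamma_1\in\Delta$ together with the sequence of "moves" it makes on the Cayley digraph; and each move is an index $x_n\in\{0,1,\dots,m-1\}$ specifying which generator $\theta_{x_n}\in S$ is applied to pass from $\gamma_n$ to $\gamma_{n+1}$. So I would first set up the bijection: given $\Sigma\in W^{\Delta}$, define $x=(x_n)\in\{0,1,\dots,m-1\}^{\NN}$ by the rule $\gamma_{n+1}=\gamma_n e^{i\theta_{x_n}}$ (this is exactly the index appearing in the definition of $s_\Sigma$), and conversely any pair $(\gamma_1,x)\in\Delta\times\{0,1,\dots,m-1\}^{\NN}$ determines a unique $\Sigma\in W^{\Delta}$ via $\gamma_{n+1}=\gamma_n e^{i\theta_{x_n}}$ and $\gamma_1$ the chosen start. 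Thus $W^{\Delta}\cong \Delta\times M$ where $M=\{0,1,\dots,m-1\}^{\NN}$, and under this identification $s_n=s_\Sigma(n)=c_{x_n}$.

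Next I would compute $\gamma_n$ explicitly in terms of $\gamma_1$ and $x$. Iterating the recursion gives $\gamma_n=\gamma_1\,e^{i\sum_{j=1}^{n-1}\theta_{x_j}}=\gamma_1\prod_{k=0}^{m-1}e^{i\theta_k I_k(x,n-1)}$, since $\sum_{j=1}^{n-1}\theta_{x_j}$ groups into $\theta_k$ appearing $I_k(x,n-1)$ times (with $\theta_0=0$ contributing nothing). Substituting into the defining series,
\begin{align*}
\sum_{n=1}^{\infty}\alpha^{n-1}s_n\gamma_n
&=\gamma_1\sum_{n=1}^{\infty}\alpha^{n-1}c_{x_n}\prod_{k=0}^{m-1}\bigl(e^{i\theta_k}\bigr)^{I_k(x,n-1)}\\
&=\gamma_1\sum_{n=1}^{\infty}c_{x_n}\prod_{k=0}^{m-1}\bigl(\alpha e^{i\theta_k}\bigr)^{I_k(x,n-1)},
\end{align*}
where in the last step I use $\alpha^{n-1}=\prod_{k=0}^{m-1}\alpha^{I_k(x,n-1)}$ because $\sum_k I_k(x,n-1)=n-1$. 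Now the inner sum is precisely the series representation of $T_{\alpha,S}$ given by Lemma~\ref{lemma:coding expression} applied to the IFS $\psi_0(z)=\alpha z+c_0$, $\psi_k(z)=(\alpha e^{i\theta_k})z+c_k$, i.e.\ with scale factors $\alpha_0=\alpha$ and $\alpha_k=\alpha e^{i\theta_k}$ and translation parts $c_k$. Hence each $\Sigma$ with start $\gamma_1=\gamma$ contributes exactly the point $\gamma\cdot f(x)$ with $f(x)\in T_{\alpha,S}$, and as $x$ ranges over all of $M$ we get $\gamma\cdot T_{\alpha,S}$. Taking the union over $\gamma_1\in\Delta$ gives $X_{\alpha,S}=\bigcup_{\gamma\in\Delta}\gamma\cdot T_{\alpha,S}$, which is the claim; convergence of all series is immediate from $|\alpha|<1$, $|\gamma_n|=1$, and $\{c_k\}$ finite.

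The steps are mostly bookkeeping, so there is no deep obstacle; the one point requiring care is verifying that the correspondence $\Sigma\leftrightarrow(\gamma_1,x)$ is genuinely a bijection onto $W^{\Delta}$ — i.e.\ that every index sequence $x$ yields a sequence $(\gamma_n)$ that actually lies in $\Delta^{\NN}$ (true because $\Delta$ is a group closed under multiplication by the generators $e^{i\theta_k}$) and satisfies the DRC (immediate from the recursion), and that distinct pairs give distinct sequences (clear, since $\gamma_1$ is recovered as the first term and $x_n$ is recovered from the ratio $\gamma_{n+1}/\gamma_n$, noting that the generators $e^{i\theta_j}$ are distinct because the $\theta_j$ are distinct angles). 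I would also remark that the union need not be disjoint, consistent with the statement only claiming a union of $|\Delta|$ rotated copies.
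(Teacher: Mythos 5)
Your proposal is correct and follows essentially the same route as the paper: both apply Lemma~\ref{lemma:coding expression} with $\alpha_k=\alpha e^{i\theta_k}$, rewrite the product $\prod_k(\alpha e^{i\theta_k})^{I_k(x,n-1)}$ as $\alpha^{n-1}e^{i\sum_{j=1}^{n-1}\theta_{x_j}}$, and identify $\Delta$-revolving sequences with pairs (starting value in $\Delta$, coding sequence $x$) via $\gamma_{n+1}=\gamma_n e^{i\theta_{x_n}}$. Your explicit factoring out of $\gamma_1$ is just a compact restatement of the paper's step $X_{\alpha,S}=\bigcup_{\gamma\in\Delta}\gamma\cdot X_{1,\alpha,S}$ with $X_{1,\alpha,S}=T_{\alpha,S}$.
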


		\begin{proof}[Proof.]~\\
		For a given generator set $S=\{\theta_0, \theta_1,\dots,\theta_{m-1}\}$, set $\alpha_{k}=\alpha e^{i\theta_{k}}$ for $k=0, 1, \dots, m-1$ in Lemma~\ref{lemma:coding expression}. Note that $\alpha_{0}=\alpha$ since $\theta_0=0$. Then it follows from \eqref{Attractor} that the self-similar set $T_{\alpha, S}$ has the following representation. 
    \begin{equation*}
        T_{\alpha, S}=\left\{\sum_{n=1}^{\infty}c_{x_n}\prod_{k=0}^{m-1}(\alpha e^{i\theta_k})^{I_k(x,n-1)}\::\:x=(x_n)\in\{0,1,\dots,m-1\}^{\NN}\right\}.
    \end{equation*}
		Observe that $\sum_{k=0}^{m-1}{I_k(x,n-1)}=n-1$ and  
	\begin{equation*}
	\prod_{k=0}^{m-1}e^{i\theta_k \cdot I_k(x,n-1)}=e^{i \sum_{k=0}^{m-1}I_k(x,n-1) \cdot \theta_k}=e^{i\sum_{j=1}^{n-1}\theta_{x_j}},
    \end{equation*}
Thus, 
    \begin{equation}
		\label{eq:T_aS}
        T_{\alpha, S}=\left\{\sum_{n=1}^{\infty}\alpha^{n-1}c_{x_n} e^{i \sum_{j=1}^{n-1}\theta_{x_j}}\::\: (x_n) \in\{0,1,\dots,m-1\}^{\NN}\right\}.
    \end{equation}

Define $X_{1,\alpha,S}$ given by:
        \begin{equation*}
        \label{eq:Attractor}
            X_{1,\alpha,S}=\left\{\sum_{n=1}^\infty\alpha^{n-1} s_n \gamma_n\::\:\Sigma=(\gamma_n)\in W^{\Delta}, \gamma_1=1, \:s_n=s_{\Sigma}(n)\right\}.
        \end{equation*} 
				
We prove that $X_{1,\alpha,S}=T_{\alpha,S}$. Let $\gamma_n=e^{i\sum_{j=1}^{n-1}\theta_{x_j}}$. Then it is clear that there exists a coding: 
\begin{equation*}
        \gamma_1=1, \qquad \gamma_{n+1}=\gamma_n \cdot e^{i \theta_{x_n}} 
\end{equation*}
for $n=1, 2, \dots$. Therefore, $(\gamma_n)$ satisfies the DRC with the restriction that the first term is $1$.

Let $s_n=c_{x_n}$. It is clear that $(s_n)$ is the constant sequence of $(\gamma_n)$. 
\medskip

Conversely, take a point $z \in X_{1,\alpha,S}$, which is generated by a sequence $(\gamma_n)_{n \in \NN}$. Construct a sequence $(x_n)\in\{0,1,\dots,m-1\}^{\NN}$ by  
\begin{equation*}
x_n=k, \qquad \mbox{if } \gamma_{n+1}=\gamma_n e^{i \theta_k}, 
\end{equation*}
for $k=0,1,2,\dots (m-1)$. Notice that $\theta_k$ is a unique revolving angle for each $k$ so that $(x_n)$ is determined uniquely by $(\gamma_n)$.   

Recall that for a given generator set $S$, the revolving group generated by $S$ has $\lcm (|p_1|, \cdots, |p_{m-1}|)$ many elements where $p_i$ is the denominator of $\theta_i$ in reduced form. Thus, 

\begin{equation*}
 X_{\alpha,S}=\bigcup_{\gamma \in \Delta}\gamma \cdot X_{1,\alpha,S}=\bigcup_{\gamma \in \Delta}\gamma \cdot T_{\alpha,S}.
\end{equation*}
 \end{proof}  

   \begin{figure}
	\begin{center}
		\includegraphics[height=4.5cm,width=4.5cm, bb=100 100 1000 1000]{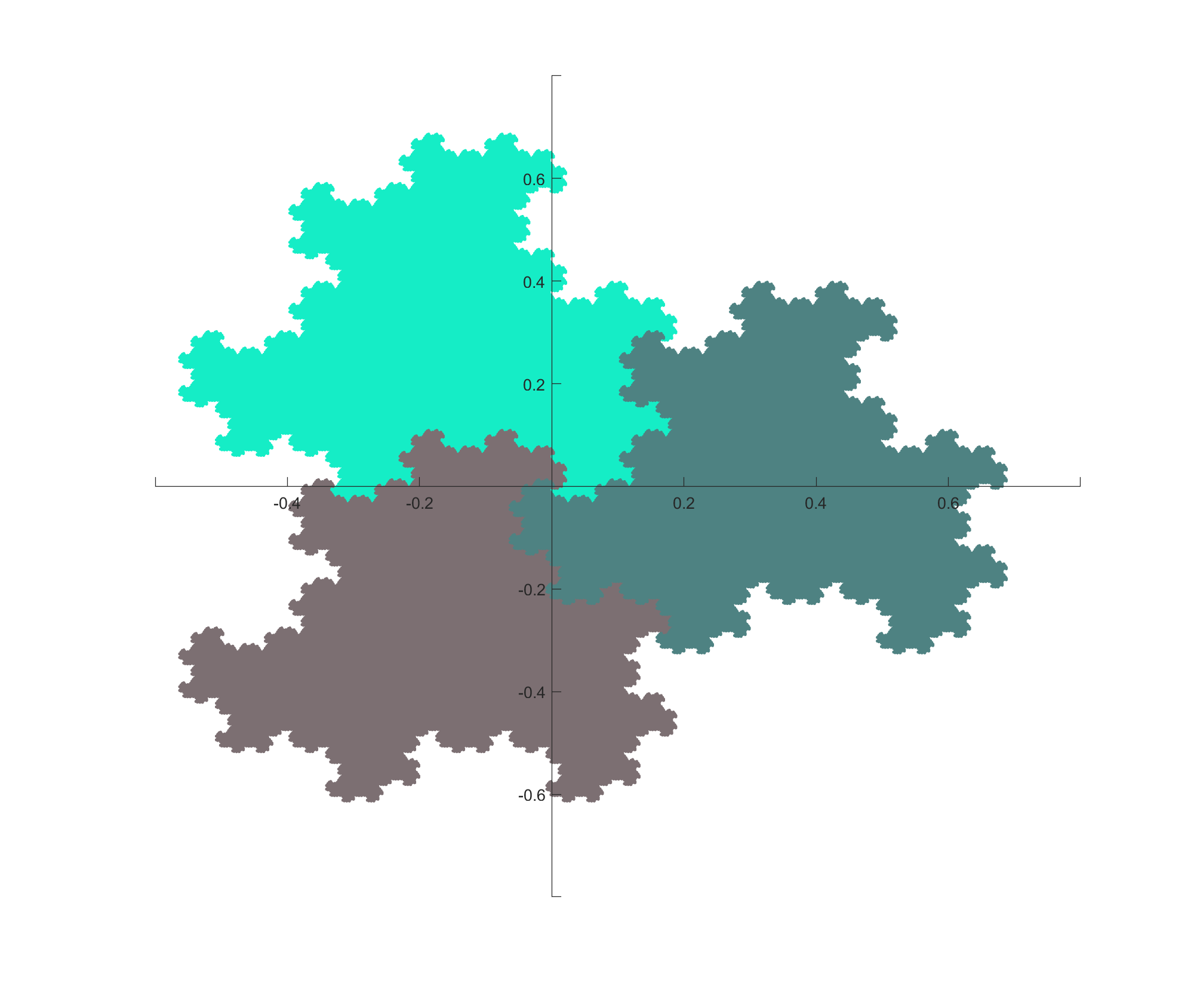}
		\end{center}
	\vspace{1cm}
	
    \includegraphics[height=4.5cm,width=4.5cm, bb=100 100 1000 1000]{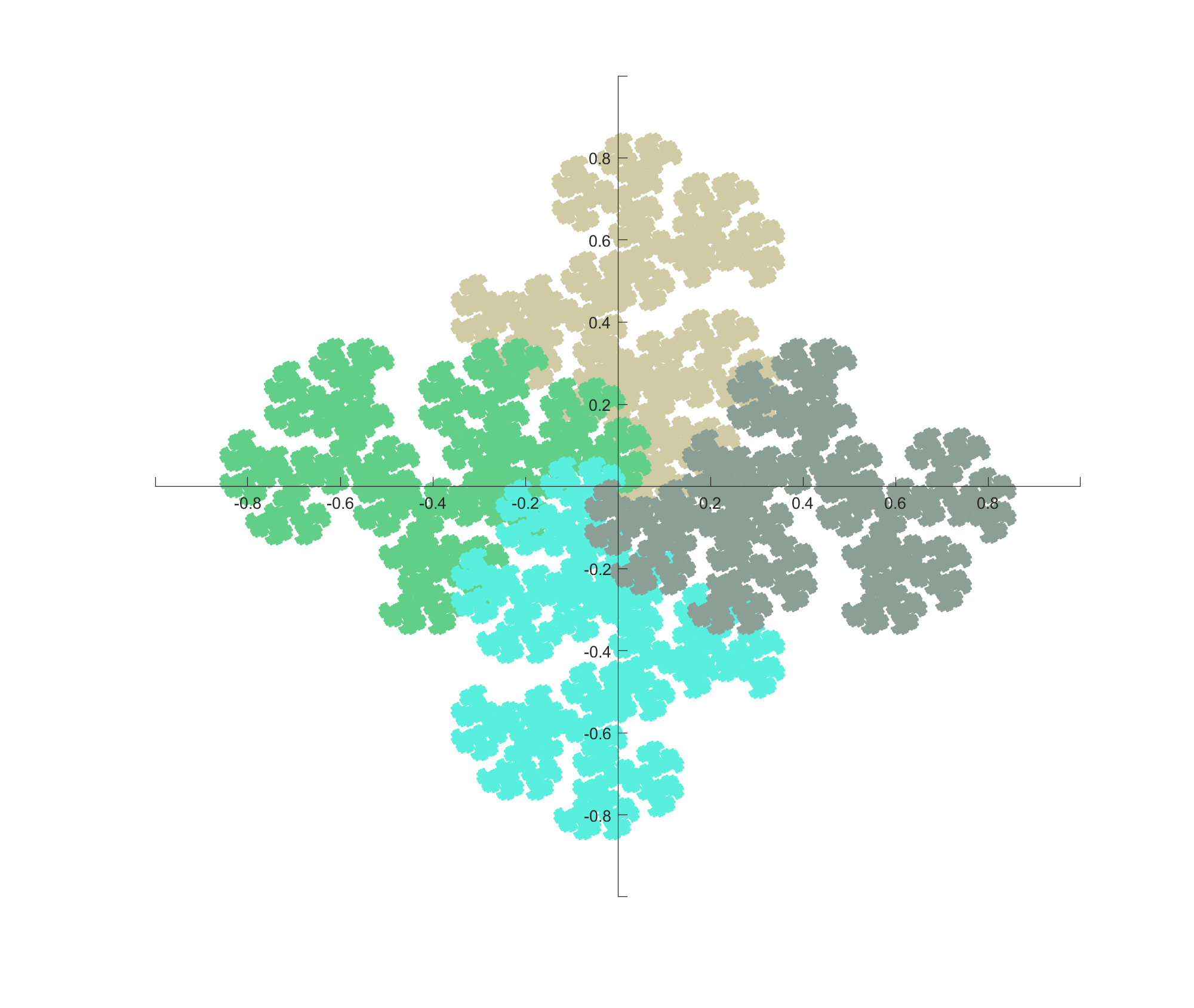}
    \qquad \qquad \qquad
    \includegraphics[height=4.5cm,width=4.5cm, bb=100 100 1000 1000]{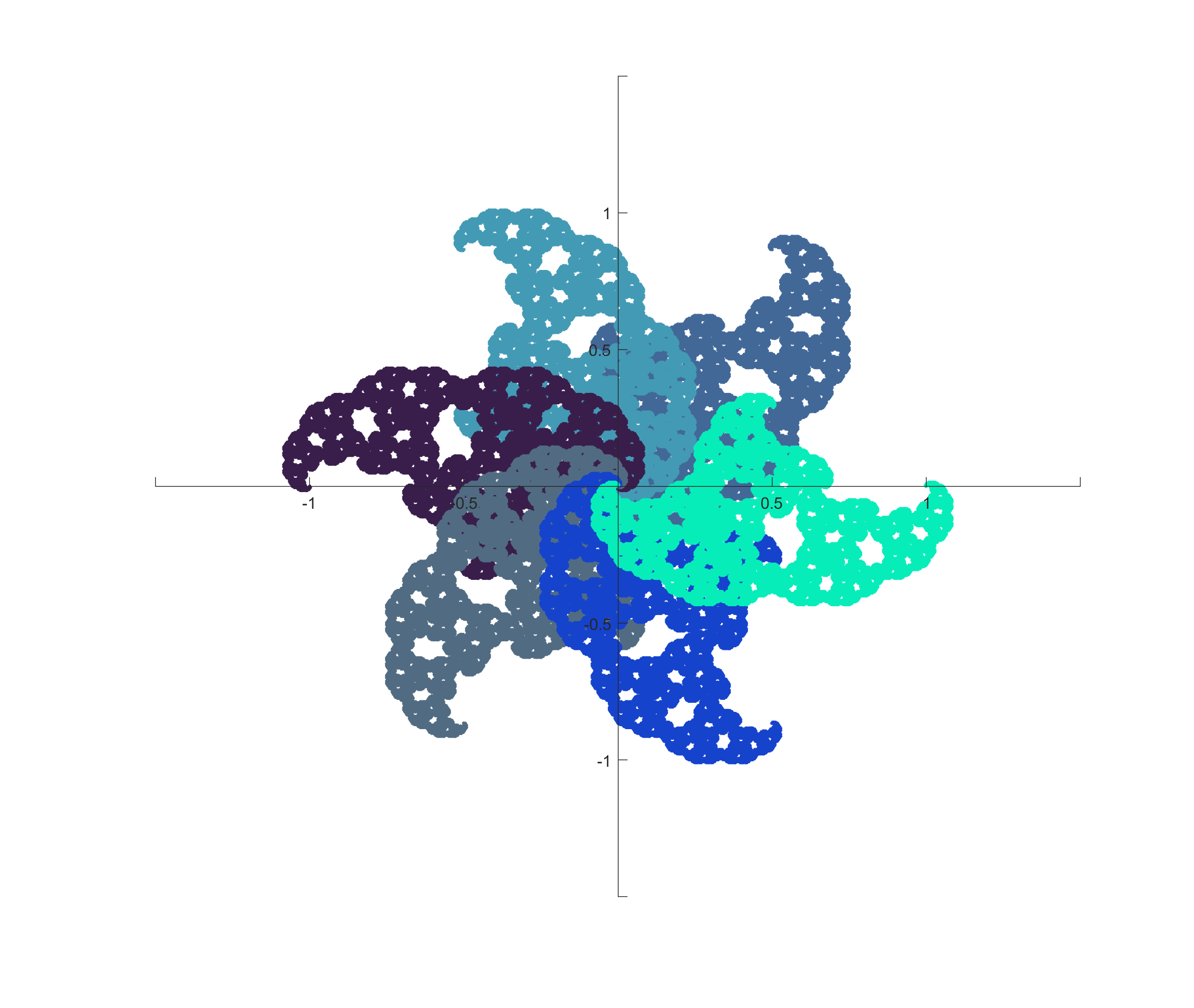}
    \caption{Examples of $X_{\alpha,S}$: $(m, c_0, c_1, c_2, \alpha)=(3, 0, \alpha, \bar{\alpha}, \frac12-\frac{\sqrt{3}i}{6})$.	
		$(\theta_1, \theta_2)=(\pi/3, -2\pi/3)$: a union of 3 fudgeflake (top),  
		$(\theta_1, \theta_2)=(\pi/2, -\pi/2)$: (bottom left), 
		$(\theta_1, \theta_2)=(\pi/3, -\pi/3)$: (bottom right).
		}
    \label{figure3} 
    \end{figure}

\begin{example}
{\rm 
Mandelbrot introduced a tiling fractal, known as {\it the fudgeflake} in his classic book (see page 72 in ~\cite{Mandelbrot-1982}). {\it The fudgeflake} is a self-similar attractor generated by three similar contractions:
\begin{equation}
\label{eq:IFS3}
\begin{cases}
 \psi_1(z)=\alpha z,& \\
 \psi_2(z)=(\alpha e^{i \theta_1}) z+ \alpha, & \\
 \psi_3(z)=(\alpha e^{i \theta_2}) z + \bar{\alpha},
\end{cases}
\end{equation}
where $\alpha=\frac12-\frac{\sqrt{3}i}{6}$, $\theta_1=\pi/3$ and $\theta_2=-2\pi/3$. For more details. see page 22-23 in Edger's book~\cite{Edgar-1990}.

It is clear that Theorem~\ref{th:maintheorem} includes this particular case. Figure \ref{figure3} shows several examples of $X_{\alpha,S}$; in particular, the self-similar attractors of \eqref{eq:IFS3}. 

{\it Terdragon} is another famous tiling self-similar attractor generated by \eqref{eq:IFS3} if $\alpha=\frac12-\frac{\sqrt{3}i}{6}$, $\theta_1=2\pi/3$ and $\theta_2=0$.(See page 163 in~\cite{Edgar-1990}). However, notice that Theorem~\ref{th:maintheorem} excludes this case since $\theta_2=0$.} 
\end{example}

\begin{example}{\rm 
The Heighway dragon and the Twindragon are famous self-similar attractors, not generated by \eqref{eq:IFS1} but by a slightly different pair of two similar contractions:
\begin{equation}
\label{eq:IFS2} 
\begin{cases}
 \phi_0(z)=\alpha z,& \\
 \phi_1(z)=(\alpha e^{i \theta}) z+1.
\end{cases}
\end{equation}
In particular, if $\alpha=(1+i)/2$ and $\theta=\pi/2$, then the IFS generates the Heighway dragon. If $\alpha=(1+i)/2$ and $\theta=\pi$, then  the Twindragon is generated~\cite{Edgar-1990}.

\medskip 

Let $S=\{0, \theta\}, c_0=0$ and $c_1=1$ in Theorem~\ref{th:maintheorem}. Since 
\begin{equation*}
		s_n=s_{\Sigma}(n)=
\begin{cases}
 0,& \mbox{ if } \gamma_{n+1}=\gamma_n,\\
 1, & \mbox{ if } \gamma_{n+1}=\gamma_n e^{i \theta},
\end{cases}
\end{equation*}
$X_{\alpha, S}$ can also be parametrized by generalized revolving sequences $(\delta_n)$.  
\begin{equation}
\label{eq:Heighway}
  X_{\alpha,S}=\left\{ \sum_{n=0}^{\infty} \delta_{n}\alpha^{n}: (\delta_{n})\in W_{\theta} \right\}.
\end{equation}
Note that the only difference between \eqref{eq:Heighway} and \eqref{eq:X1} is whether the sum begins from $n=0$ or $n=1$. 
It is not surprising. Let $H_{\alpha,\theta}=\phi_0(H_{\alpha,\theta}) \cup \phi_1(H_{\alpha,\theta})$ be the self-similar set generated by the IFS \eqref{eq:IFS2}. Notice that 
$\psi_0(z)=\alpha \phi_0(z/\alpha)$ and $\psi_1(z)=\alpha \phi_1(z/\alpha)$ in \eqref{eq:IFS1} and \eqref{eq:IFS2}. Thus, it is easy to see  that $H_{\alpha, \theta}=M_{\alpha,\theta}/\alpha$.
} 
\end{example}

    \begin{figure}
    \begin{center}
		\includegraphics[height=4.5cm,width=4.5cm, bb=100 100 1000 1000]{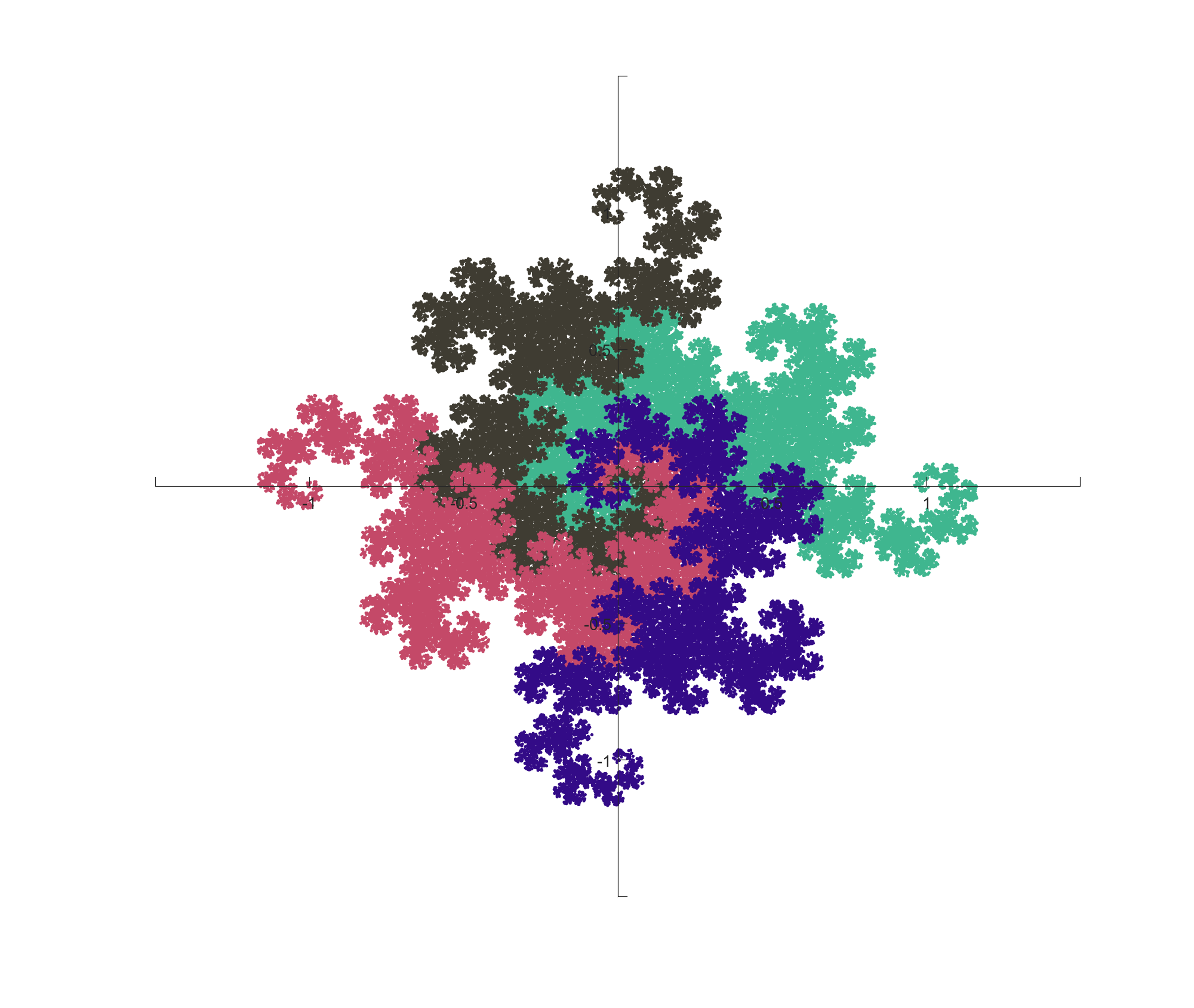} 
		\end{center}
		\caption{$X_{\alpha,S}$: $(m, c_0, c_1, \alpha, \theta)=(2, 0, 1, \frac{1+i}{2}, \frac{\pi}{2})$: a union of 4 Heighway dragons}
    \label{figure4} 
    \end{figure}


\begin{remark}
{\rm 
Theorem~\ref{theorem-Kawamura-Allen} is a special case of Theorem~\ref{th:maintheorem}. 
Let $S=\{0, \theta_1\}, c_0=0$ and $c_1=\alpha$ so that  
\begin{equation*}
		s_n=
\begin{cases}
 0,& \mbox{ if } \gamma_{n+1}=\gamma_n,\\
 \alpha, & \mbox{ if } \gamma_{n+1}=\gamma_n e^{i \theta_1}.
\end{cases}
\end{equation*}

Observe that $\delta_n=\alpha^{-1}\gamma_n s_n$ since (i) $\delta_n=0$ if $\gamma_{n+1}=\gamma_n$ and (ii) $\delta_n=\gamma_n$ if $\gamma_{n+1} =\gamma_n e^{i \theta_1}$. Therefore,  
\begin{equation*}
\left\{\sum_{n=1}^\infty\alpha^{n-1}\gamma_n s_n\::\:\Sigma=(\gamma_n)\in W^{\Delta},\:s_n=s_{\Sigma}(n)\right\}=\left\{\sum_{n=1}^{\infty} \delta_{n}\alpha^{n}: (\delta_n)\in W_{\theta} \right\},
\end{equation*}
where $W^{\Delta}$ is the set of all $\Delta$-revolving sequences, and $W_{\theta}$ is the set of all generalized revolving sequences.
}
\end{remark}

\section{Remark}
\label{remark}

First, we define a slight modification of {\it $\Delta$-revolving sequences} as follows.

    \begin{definition}[$\Delta_0$-Revolving Sequence]
		Let $\Delta_0:=\Delta\cup\{0\}$. We say that a sequence $(\delta_n)\in\Delta_{0}^\mathbb{N}$ satisfies the \textbf{$\Delta_0$-Revolving Condition (DZRC)} if
        \begin{enumerate}
					\item $\delta_1$ is free to choose in $\Delta_0$
					\item If $\delta_1=\delta_2=\dots=\delta_n=0$, then $\delta_{n+1}$ is free to choose in $\Delta_0$. 
          \item Otherwise, $\delta_{n+1}=0$ or $\delta_{n+1}=\delta_{j_0(n)} e^{i\theta_k}$ for some $\theta_k \in S \backslash \{0\}$,
        \end{enumerate}
        where $j_0(n)=max\{j \leq n:\delta_j \neq 0\}$.
\end{definition}

\begin{example}
If $S=\{0, \pi,\frac{2\pi}{3}\}$, $\Delta_0= \{0, 1,e^{i(\pi)},e^{i(\frac{2\pi}{3})},e^{i(\frac{4\pi}{3})},e^{i(\pi+\frac{2\pi}{3})},e^{i(\pi+\frac{4\pi}{3})}\}$, and, for examples, 
				\begin{equation}
				\label{example:delta-0}
				(\delta_n)=(1, e^{i\pi}, 0, e^{i(\pi+\frac{2\pi}{3})}, 0, 0,  e^{i(\frac{2\pi}{3})}, \cdots )
				\end{equation}
				is a $\Delta_0$-revolving sequence.
    \end{example} 
		
		We denote the set of all $\Delta_0$-revolving sequences with generator set $S$ by $W^{\Delta_0}$.
		
Compare examples: \eqref{example:delta} and \eqref{example:delta-0}. Observe that $\Delta_0$-revolving sequences are a more natural extension of the notation of generalized revolving sequences than $\Delta$-revolving sequences. In fact, $\Delta_0$-revolving sequences are past-dependent.

\medskip
Now, a question arises naturally: for a given $\alpha \in\CC$ such that $|\alpha|<1$ and a generator set $S$, define  
$$X^{*}_{\alpha,S}:=\left\{ \sum_{n=1}^{\infty} \delta_{n}\alpha^{n}: (\delta_n)\in W^{\Delta_0} \right\}.$$ 
It is not hard to imagine that $X^{*}_{\alpha, S}$ is a union of self-similar sets; however, it is not immediately clear which IFS generates these self-similar sets.

\begin{corollary}
 $X^{*}_{\alpha,S}$ is the union of $|\Delta|$ many rotated copies of the self-similar attractor $T^{*}_{\alpha, S}$:
        \begin{equation*}
            X^{*}_{\alpha,S}=\bigcup_{\gamma\in\Delta}\gamma\cdot T^{*}_{\alpha,S},
        \end{equation*}
				where $T^{*}_{\alpha, S}$ is generated by the IFS:
				\begin{equation*}
\begin{cases}
 \psi_0(z)=\alpha z & \\
 \psi_k(z)=(\alpha e^{i \theta_k}) z+\alpha, \qquad k=1, 2, \dots, m-1.
\end{cases}
\end{equation*}
\end{corollary}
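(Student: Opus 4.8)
The plan is to reduce the corollary to Theorem~\ref{th:maintheorem} by choosing the right constants $c_k$ and then carefully matching the $\Delta_0$-revolving parametrization against the $\Delta$-revolving one. Concretely, I would apply Theorem~\ref{th:maintheorem} with $c_0 = 0$ and $c_1 = c_2 = \dots = c_{m-1} = \alpha$, so that the associated IFS is exactly the one in the statement of the corollary, and $T^{*}_{\alpha,S} = T_{\alpha,S}$ for this choice. With these constants the constant sequence of a $\Delta$-revolving sequence $\Sigma = (\gamma_n)$ is $s_n = 0$ when $\gamma_{n+1} = \gamma_n$ and $s_n = \alpha$ when $\gamma_{n+1} = \gamma_n e^{i\theta_k}$ for some $k \neq 0$. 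Thus the summand $\alpha^{n-1} s_n \gamma_n$ equals $\alpha^n \gamma_n$ exactly when a genuine move occurs, and $0$ otherwise, mimicking the remark that handles Theorem~\ref{theorem-Kawamura-Allen}.

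The heart of the argument is the bijection (up to producing the same point) between $W^{\Delta_0}$ and $W^{\Delta}$ that respects these sums. First I would take a $\Delta_0$-revolving sequence $(\delta_n)$ and build a $\Delta$-revolving sequence $(\gamma_n)$ by tracking the ``last nonzero position'': set $\gamma_n = \delta_{j_0(n-1)}$ once some $\delta_j$ is nonzero (with a free choice of $\gamma_1$ on the initial all-zero block, which only contributes terms equal to $0$ anyway, or one absorbs the leading zeros), and check that $\gamma_{n+1} = \gamma_n$ exactly when $\delta_{n+1} = 0$, while $\gamma_{n+1} = \gamma_n e^{i\theta_k}$ exactly when $\delta_{n+1} = \delta_{j_0(n)} e^{i\theta_k} \neq 0$. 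Then $\delta_n = \alpha^{-1}\gamma_n s_n$ as in the remark, so $\sum \delta_n \alpha^n = \sum \alpha^{n-1} s_n \gamma_n$, placing the point in $X_{1,\alpha,S}$ after handling the first-term shift. Conversely, given a $\Delta$-revolving sequence with $\gamma_1 = 1$ and its constant sequence, one reads off $\delta_n = \alpha^{-1}\gamma_n s_n \in \Delta_0$ and verifies the DZRC directly: the nonzero $\delta_n$'s occur precisely at the steps where $\gamma$ moves, and at such a step $\gamma_n$ is the previous $\gamma$ value times $e^{i\theta_k}$, which is exactly $\delta_{j_0(n-1)} e^{i\theta_k}$ up to the bookkeeping. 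Finally, the factor of $|\Delta|$ rotated copies comes out of the same group-orbit decomposition $X^{*}_{\alpha,S} = \bigcup_{\gamma \in \Delta} \gamma \cdot X^{*}_{1,\alpha,S}$ used in the proof of Theorem~\ref{th:maintheorem}, since multiplying every $\delta_n$ by a fixed $\gamma \in \Delta$ sends $W^{\Delta_0}$ to itself and scales the sum by $\gamma$.

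The main obstacle I anticipate is the treatment of the leading all-zero block in a $\Delta_0$-revolving sequence, where $j_0$ is undefined and $\gamma_1$ has no forced value. One must check that such a prefix of zeros contributes nothing to $\sum \delta_n \alpha^n$ and can be matched with an arbitrary (say, constant) $\Delta$-revolving prefix without changing which point of $X^{*}_{\alpha,S}$ is represented; equivalently, one can argue that $\delta_n = 0$ for $n \le N$ forces the tail to start at $\delta_{N+1}$ freely in $\Delta_0$, so the point lies in $\alpha^{N}\cdot(\text{a scaled copy})$, and absorb this into the union over $\Delta$ together with the geometric self-similarity. The rest is routine bookkeeping: once the dictionary $\delta_n \leftrightarrow \alpha^{-1}\gamma_n s_n$ is set up, the DZRC clauses (1)--(3) correspond line-by-line to the DRC clauses (1)--(2) plus the definition of the constant sequence, and the series identity is immediate.
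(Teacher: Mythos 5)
Your overall strategy --- specialize Theorem~\ref{th:maintheorem} to $c_0=0$, $c_1=\cdots=c_{m-1}=\alpha$ and then translate between the $\Delta_0$- and $\Delta$-parametrizations via $\delta_n=\alpha^{-1}\gamma_n s_n$ --- is the right one and matches the paper's intent (the paper gives no explicit proof of the corollary; its key device appears in the remark that follows it). However, your explicit reconstruction of $(\gamma_n)$ from $(\delta_n)$ is off by one step in a way that breaks the series identity. Setting $\gamma_n=\delta_{j_0(n-1)}$ makes $\gamma_n$ the value of the \emph{previous} nonzero digit, whereas the dictionary $\delta_n=\alpha^{-1}\gamma_n s_n$ forces $\gamma_n=\delta_n$ whenever $\delta_n\neq0$ (in the paper's convention, $\delta_n$ is the position \emph{before} the $n$-th move: $\gamma_{n+1}=\gamma_n e^{i\theta_k}=\delta_n e^{i\theta_k}$ when $\delta_n\neq0$). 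With your definition the $n$-th summand is $\alpha^{n}\delta_{j_0(n-1)}=\alpha^{n}\delta_n e^{-i\theta_k}$ rather than $\alpha^n\delta_n$. Concretely, for $S=\{0,\pi/2\}$ and $(\delta_n)=(1,i,0,-1,0,0,\dots)$, which represents $\alpha+i\alpha^2-\alpha^4$, your walk yields $s_1\gamma_1+\alpha^2+i\alpha^4$. Note that neither $\delta_{j_0(n-1)}$ nor $\delta_{j_0(n)}$ works: the correct $\gamma_n$ is the \emph{next} nonzero digit at or after position $n$. Equivalently, first recover the coding by $x_n=0$ iff $\delta_n=0$, read $\theta_{x_n}$ for $\delta_n\neq0$ off the ratio of the next nonzero digit to $\delta_n$, and then set $\gamma_n=e^{i\sum_{j<n}\theta_{x_j}}$ exactly as in the proof of Theorem~\ref{th:maintheorem}.

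This forward-looking reconstruction exposes the one genuinely non-routine point, which you do not address: if $(\delta_n)$ is eventually all zero, the angle $\theta_{x_n}$ at the \emph{last} nonzero position cannot be read off from $(\delta_n)$ at all, so $x_n$ there may be any $k\neq0$. The corollary survives precisely because $c_1=\cdots=c_{m-1}=\alpha$, so this ambiguity does not change the value of the sum; this is exactly the obstruction the paper's closing remark identifies for general constants $c_k$, and it is where the hypothesis of the corollary is actually used. Your treatment of the leading block of zeros is fine, the direction from $W^{\Delta}$ to $W^{\Delta_0}$ is essentially correct, and the final orbit decomposition over $\Delta$ is unproblematic, but as written the middle of the argument does not go through without repairing the dictionary and handling the eventually-zero tails.
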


\begin{remark}
{\rm In general, self-similar attractors generated by 
\begin{equation*}
\begin{cases}
 \psi_0(z)=\alpha z & \\
 \psi_k(z)=(\alpha e^{i \theta_k}) z+c_k, \qquad k=1, 2, \dots, m-1.
\end{cases}
\end{equation*}
cannot be parametrized by $\Delta_0$-revolving sequences. Consider \eqref{eq:T_aS} in the proof of Theorem~\ref{th:maintheorem}.
Since $c_0=0$, let $\delta_n=\rI(x_n) e^{i\sum_{j=1}^{n-1}\theta_{x_j}}$, where 
\begin{equation*}
		\rI(x_n)=
\begin{cases}
 0, & \mbox{ if } x_n=0, \\
 1, & \mbox{ if } x_n \neq 0.
\end{cases}
\end{equation*}
Then it is clear that there exists a $\Delta_0$-revolving sequence $(\delta_n)$ given by 
\begin{equation*}
		\delta_n=
\begin{cases}
 0, & \mbox{ if } x_n=0,\\
 \delta_{j_0(n-1)}e^{i \theta_{x_{j_0(n-1)}}} , & \mbox{ if $j_0(n-1)$ exists}, \\
 1, & \mbox{ if  $j_0(n-1)$ does not exist},
\end{cases}
\end{equation*}
where $j_0(n)=\max\{j \leq n: x_j \neq 0\}$. 
However, notice that there is an issue to define $(x_n)$ from $(\delta_n)$, if $(\delta_n)$ is a sequence whose terms are eventually all $0$.
} 
\end{remark}

\section*{Acknowledgment}
The first author would like to express her gratitude to the late Prof.~S.~Ito, who introduced one of his results in \cite{Ito-1987}. He inspired and encouraged many female mathematicians to study problems related to fractal geometry and number theory.   
Lastly, We greatly appreciate Prof.~P.~Allaart for his helpful comments and suggestions in preparing this paper.

\end{document}